\newcommand{\tr}{\operatorname{tr}}
\renewcommand{\div}{\operatorname{div}}
\def\leq{\leqslant}
\def\geq{\geqslant}
\def \R{\mathbb{R}}
\def \E{\mathbb{E}}
\def \P{\mathbb{P}}
\newcommand{\leqnomode}{\tagsleft@true}
\newcommand{\reqnomode}{\tagsleft@false}
\newcommand{\eps}{\varepsilon}
\newcommand{\norm}[1]{\ensuremath{\left\Arrowvert #1 \right\Arrowvert}}
\newcommand{\indic}{\mathlarger{\mathbbm{1}}}
\newcommand{\bo}[1]{\boldsymbol{#1}}
\newcommand{\ds}{\displaystyle}
\newcommand{\intz}{\displaystyle\int_{-d}^0}
\newcommand{\bps}[1]{\frac{\langle \bo b,\bo h(#1)\rangle-\alpha}{2\beta^2}\,\bo b}
\numberwithin{equation}{section}
\newtheoremstyle{thmlemcorr}{10pt}{10pt}{\itshape}{}{\bfseries}{.}{10pt}{{\thmname{#1}\thmnumber{
#2}\thmnote{ (#3)}}}
\newtheoremstyle{teolemcorr*}{10pt}{10pt}{\itshape}{}{\bfseries}{.}\newline{{\thmname{#1}\thmnumber{
\newtheoremstyle{defi}{10pt}{10pt}{\itshape}{}{\bfseries}{.}{10pt}{{\thmname{#1}\thmnumber{
#2}\thmnote{ (#3)}}}
\newtheoremstyle{remexample}{10pt}{10pt}{}{}{\bfseries}{.}{10pt}{{\thmname{#1}\thmnumber{
#2}\thmnote{ (#3)}}}
\newtheoremstyle{ass}{10pt}{10pt}{}{}{\bfseries}{.}{10pt}{{\thmname{#1}\thmnumber{
A#2}\thmnote{ (#3)}}}
\theoremstyle{thmlemcorr}
\newtheorem{theorem}{Theorem}
\numberwithin{theorem}{section}
\newtheorem{pro}[theorem]{Proposition}
\theoremstyle{thmlemcorr*}
\newtheorem{theorem*}{Theorem}
\newtheorem{lemma*}[theorem]{Lemma}
\newtheorem{corollary*}[theorem]{Corollary}
\newtheorem{proposition*}[theorem]{Proposition}
\newtheorem{problem*}[theorem]{Problem}
\newtheorem{conjecture*}[theorem]{Conjecture}
\theoremstyle{defi}
\newtheorem{definition}[theorem]{Definition}
\newtheorem{hyp}{Assumptions}
\theoremstyle{remexample}
\theoremstyle{ass}
\begin{document}

\title[MFG in Optimal Advertising Models]{Mean Field Games Incorporating Carryover Effects: Optimizing Advertising Models}

\author{Michele Ricciardi}
\address[M. Ricciardi]{
Università LUISS - Guido Carli. Viale Romania 32, 00197 Roma, Italy}
\email{mricciardi@luiss.it}
\author{Mauro Rosestolato}
\address[M. Rosestolato]{
Università di Genova. Via Vivaldi 5, 12126 Genova, Italy}
\email{mauro.rosestolato@unige.it}

%
\date{}

\begin{abstract}
	We consider a class of optimal control problems that arise in connection with optimal advertising under uncertainty.
	Two main features appear in the model: a delay in the control variable driving the state dynamics; a mean-field term both in the state dynamics and in the utility functional, taking into account for other agents.
	We interpret the model in a competitive environment, hence we set it in the framework of Mean Field Games.
	We rephrase the problem in an infinite dimensional setting, in order to obtain the associated Mean Field Game system.  Finally, we
	specify the problem to a simple case, and solve it providing an explicit solution.
\end{abstract}

\maketitle
\section{Introduction}

Optimal advertising problems have been  largely studied, starting from the  seminal papers
\cite{VidaleWolfe1957,NerloveArrow1962}, through various extensions, including
the case in a stochastic environment
(\cite{PrasadSethi2004,ViscolaniGrosset2004,Marinelli2007,motte2021optimal}), and including a delay effect on the control variable
(\cite{GozziMarinelli,GozziMarinelliSavin,Hartl1984,Feichtingeretal1994}).

Game theory is the natural theoretical framework to be considered
when the outcome of each agent
is influenced by the actions taken by other agents, as is the case
with advertising problems.
But when the number of agents is large, the game theory approach is difficult to exploit.
In such a case, when a large number of agents occurs,
other approaches are possible, under the assumption that 
the agents  maximize the same functional by controlling the same dynamics.
Indeed, by considering a
 limit problem formally
associated to the original game,
one can obtain a new class of problems whose structure is still representative and meaningful for the intended application (see \cite{carmona2018probabilistic}).
In the limit problem, the fact that the state equation of each agent is influenced by all the  others is expressed through a mean-field term depending on the law of the state variable, which is the key feature of such class of problems.

There are two ways to interpret the mean-field term.
The agent optimizing her utility within this mean-field framework can consider
the mean-field term
as endogenous, meaning,
as determined by her own state distribution.
A different perspective occurs when the agent   considers
the mean-field term as a given exogenous term, representing the influence of the  distribution  of the aggregation of
all the other agents' states.

The first perspective is suitable for applications to non-competitive environments, and the
theoretical framework is that of McKean-Vlasov optimal control
(see the pioneering work \cite{Kac1956FoundationsOK,MR0233437} and the seminal paper \cite{Sznitman1991TopicsIP} in the framework of propagation of chaos; more recent contributions are, e.g., \cite{Pham2017,COSSO2019180,Djete2019McKeanVlasovOC}; see also \cite{wu2019viscosity} for an extension to the path-dependent case, and \cite{Cosso20232863,Cosso202431} for the path-dependent case in infinite dimension).
The second perspective is suitable for applications to competitive environments, and the machinery to deal with is that of Mean Field Games (MFG), a theory introduced in 2006 in \cite{ll1,ll2,ll3}, using tools from mean-field theories, and in the same years in \cite{Caines1}. See also, among the extended literature on this topic, \cite{bensoussan,cardaliaguet}).


In the MFG approach, the key assumption is that at the Nash equilibria the distribution of the aggregate state of the other agents (agents are homogeneous, with the same utility and state dynamics) is assumed to be
equal to the distribution of the state of the single agent optimizing her own position by assuming the aggregate one as given.
This assumption leads to the MFG system, where the backward Hamilton-Jacobi-Bellman  (HJB) equation associated to the
optimal control of the single agent (where the mean-field term appears in the Hamiltonian and/or in the terminal condition) is coupled with the forward Fokker-Planck equation for the
distribution of the optimal state.

In this paper we address a competitive framework, hence we fall into the MFG class of problems.
But, for the presence of the mean-field term, we add a second feature in the state dynamics: a delay effect in the control.
Indeed, the state dynamics that we consider has the form
\begin{equation*}
dX(s)=\left[aX(s)+\gamma(\tilde m_0(s))+b_0u(s)+\intz b_1(\xi)u(s+\xi)\,d\xi\right]ds+\sigma dW(s),
\end{equation*}
where the delay effect in the control is due to the integral term, and the mean-field term $\gamma(\tilde{m}_0)$ appears, with $\tilde{m}_0$ representing the given distribution of the aggregate states.

When considering a delay effect in the control variable, the standard dynamic programming approach to  derive the HJB equation cannot be applied to
the value function of the agent, due to the fact that the state dynamics is not Markovian.
A useful and well-exploited machinery  to overcome this impasse consists in rephrasing
the finite dimensional state with delay in the control in an infinite dimensional setting where no delay appears.
The infinite dimensional representation does not lack Markovianity anymore, hence the dynamic programming approach can be applied, though the theory to be used must now be available for infinite dimensional spaces (the main reference here is
\cite{GozziBOOK}).

By exploiting the two theoretical frameworks -- MFG and reformulation in infinite dimension -- we end up with a MFG system where the HJB equation is set on an infinite dimensional space.

We stress the fact that, although the Mean Field Games  with finite-dimensional state space have been widely studied, very few results are available in the infinite-dimensional case. To our knowledge, the only attempt in this direction is given by \cite{federico2024linearquadratic, FouqueZhang}.

However, our aim here is not to develop a general theory for infinite dimensional MFG systems. Indeed, after representing the problem in a suitably chosen Hilbert space, we specify our data to the linear case and search for an explicit solution, that we can find also because the MFG system becomes uncoupled.
We refer the reader to
\cite{gozzi2024optimal}
for an analogous  problem, but formulated in a non-competitive framework formally set by a McKean-Vlasov optimal control.


\bigskip
The paper is organized as follows.
In 
Section~\ref{sec_problem}
we formulate the optimal advertising problem
as an optimal control problem for a stochastic
differential equations
with  mean field term and delay in the control.
We then introduce the infinite dimensional setting in which we rephrase the original problem with delay as
an infinite dimensional optimal control problem without delay.
In Section~\ref{sec_mfgsystem}
the Mean Field Game system is derived.
Finally, in Section~\ref{sec_solution}
we specify the model in a particular case and we find an explicit solution.

\section{The problem in its general form}\label{sec_problem}
Let $t\in[0,T]$. We consider the following stochastic delay differential equation (SDDE):
\begin{equation}\label{SDDE}
\begin{cases}
dX(s)=\left[aX(s)+\gamma(\tilde m_0(s))+b_0u(s)+\intz b_1(\xi)u(s+\xi)\,d\xi\right]ds+\sigma dW(s)\,,\\
X(t)=x\,,\\
u(t+\xi)=\delta(\xi)\,,\qquad\forall\,\xi\in[-d,0]\,.
\end{cases}
\end{equation}

\noindent Here, the solution $X^{t,x,u,\tilde m_0}$ (from now on we omit for simplicity the dependence on $\tilde m_0$)
denotes the stock of advertising goodwill, with initial quantity $x$ at time $t$.
The Brownian motion $W$ is defined on a
filtered probability space $(\Omega,\mathcal F, (\mathcal F_t)_{t\ge0},\P)$,
with $(\Omega,\mathcal{F},\mathbb{P})$ being complete,  and $\mathbb{F}$ being the augmentation of the filtration generated by $W$.
The function $\tilde m_0$ represents the mass distribution of the population at time $s$,
whereas $u$ is the control strategy,
valued in $\mathbb{R}$.
We require
 $$
 u\in\mathcal U\coloneqq L^2_{\mathcal P}(\Omega\times[0,T];\mathbb{R}),
 $$
 the space of square integrable
 real-valued
 progressively measurable processes.

Notice that the integral term
appearing in the dynamics of $X$
takes into account also the past of $u$, where the control $u$ is extended on $[-d,0]$
by a given (deterministic) function $\delta$, representing
the advertising expenditure in the past up to time $0$.

 In \eqref{SDDE}, we also  take $a,b_0\in\R$, $b_1\colon [-d,0]\to\R$, and $\gamma\colon\mathcal P_1(\R)\to\R$, where $\mathcal P_1(\R)$ denotes the set of Borel probability measures on $\R$ with finite first-order moment, meaning
 \begin{equation*}
   \mathcal{P}_1(\mathbb{R})
   \coloneqq
   \left\{
     \mathbb{\mu}
     \textrm{ probability measure on $(\mathbb{R},\mathcal{B}(\mathbb{R}))$}
     \colon
     \int_{\mathbb{R}}|x|\mu(dx)<\infty.
   \right\} 
 \end{equation*}
 The set $\mathcal{P}_1(\mathbb{R})$ is endowed with the
 Monge-Kantorovitch distance $\mathbf{d}_1$ (see e.g.\ \cite{MR4214773}).

 \smallskip
 \noindent The objective functional is defined in the following way:
\begin{equation}\label{J}
  \begin{split}
    &J(t,x,u)=\E\left[\int_t^T e^{-r(s-t)}\Big[f(s,X^{t,x,u}(s),\tilde m_0(s))-L(s,X^{t,x,u}(s),u(s))\Big]ds\right.\\
&\left.\phantom{\int_0^T}
   \hspace{6.22cm}
  +e^{-r(T-t)}g(X^{t,x,u}(T),\tilde m_0(T))\right],
\end{split}
\end{equation}
where $r\in\R^+$, $f:[0,T]\times\R\times\mathcal P_1(\R)\to\R$, $L:[0,T]\times\R^2\to\R$, $g:\R\times\mathcal P_1(\R)\to\R$. Here, $L$ represents the Lagrangian cost for the advertising, whereas $f$ and $g$ are, respectively, the running utility and the terminal utility of the objective functional, which may also depend on the distribution function  $\tilde m_0$.

\smallskip
Throughout the paper, the following  Assumptions~\ref{hp} will be standing

\begin{hyp}\label{hp}
\phantom{phantom}
\begin{enumerate}[a)]
\item $b_0\ge0$ and $r\ge0$, whereas $b_1\in L^2([-d,0];\R^+)$, $\delta\in L^2([-d,0];\mathbb{R})$;
\item 
  $\tilde{m}_0\colon [0,T]\rightarrow \mathcal{P}_1(\mathbb{R})$ is continuous;
\item $\gamma\colon\mathcal P_1(\Omega)\to\R$ is continuous;
\item $f$, $g$ and $L$ are measurable in all their variable,  and locally uniformly continuous in the variables $(x,m)$, uniformly with respect to $(t,u)$, meaning that for every $R>0$ there exists a modulus of continuity $\mathtt{w}_R\colon \mathbb{R}^+\rightarrow \mathbb{R}^+$ such that
  \begin{multline*}
    \sup_{\substack{t\in[0,T]\\ u\in \mathbb{R}}}
     \Big( 
    |f(t,x,u)-f(t,x',u)|
    +
    |g(x,m)-g(x',m')|
    +
    |L(t,x,u)-L(t,x',u)|
  \Big)\\
  \leq
  \mathtt{w}_R
  \left(
    |x-x'|+\mathbf{d}_1(m,m')
  \right) ,
\end{multline*}
for all $x,x'\in \mathbb{R}$ and $m,m'\in \mathcal{P}_1(\mathbb{R})$ such that $|x|\vee |x'|\vee \mathbf{d}_1(m,\delta_0)\vee \mathbf{d}_1(m',\delta_0)\leq R$;
\item
  for any $m\in \mathcal{P}_1(\mathbb{R})$,
  there exists $N_m>0$ such that
  \begin{equation*}
    |f(t,x,m)|+|g(x,m)|\leq
    N_m \left(
      1+|x|+
      |u|
    \right),
  \end{equation*}
  for all $t\in[0,T], x\in \mathbb{R},  u\in \mathbb{R}$;
\item $L$ is strictly convex
  and coercive in the control variable, with super-linear growth,
   i.e., there exist $\eps, C_0, C_1>0$ such that
$$
L(t,x,u)\ge C_1 u^{1+\eps}-C_0\,,\qquad\forall t\in[0,T],\,x\in\R,\,u\in \mathbb{R}.
$$
Moreover, $L$ is differentiable in the control variable.
\end{enumerate}
\end{hyp}

\smallskip
Under
Assumptions~\ref{hp}, it is easily seen
that,
for any
$t\in [0,T), x\in \mathbb{R}, u\in \mathcal{U}$, there exists a
unique solution
$X^{t,x,u}$ to
\eqref{SDDE}, that
$X^{t,x,u}\in L^p_\mathcal{P}(\Omega\times [0,T];\mathbb{R})$ for all $p\geq 1$.
Moreover,
Assumptions~\ref{hp} guarantees that
the reward functional $J$ in
\eqref{J}
is well-defined and finite for any $(t,x,u)\in [0,T]\times
\mathbb{R}\times \mathcal{U}$.

\smallskip
The players choose the best control to maximize the objective functional $J$. To this aim, we define the value function as follows:
\begin{equation}\label{origV}
  V(t,x)=\sup_{u\in\mathcal U}J(t,x,u)\qquad \forall (t,x)\in [0,T]\times \mathbb{R}.
\end{equation}
Assumptions~\ref{hp} assures that
$V$ is finite.

\smallskip
Usually, the Hamilton-Jacobi equation satisfied by the value function is computed by applying the dynamic programming principle and Ito's formula to
 $V$. In this case, the integral term appearing in \eqref{SDDE} makes the process $X^{t,x,u}$ non-Markovian, hence the application of the dynamic programming principle is not at all obvious.

 Then the approach we follow here
 is  to consider the process $X^{t,x,u}$ as the projection, on the first component, of a Markovian process $Y$ defined on a suitably chosen Hilbert space, and to write the Hamilton-Jacobi equation for the value function related to this process. More precisely, we rephrase in our setting the approach used in  \cite{GozziMarinelli,GozziMasiero,GozziMasieroRosestolato,VinterKwong}.

\subsection{Reformulation of the problem}\label{sec_reformulation}
We consider the Hilbert space $H$ defined by
$$
H\coloneqq \R\times L^2([-d,0];\R),
$$
endowed with the following scalar product and norm: for $\bo x=\big(x_0,x_1\big)\in H$, $\bo y=\big(y_0,y_1\big)\in H$,
$$
\langle \bo x,\bo y\rangle\coloneqq x_0y_0+\intz x_1(\xi)y_1(\xi)\,d\xi,\qquad \norm{\bo x}=\sqrt{x_0^2+\intz x_1^2(\xi)\,d\xi}.
$$
Now consider the operator $A\colon \mathcal{D}(A)\rightarrow H$
defined by
\begin{equation}\label{defA}
A\bo x= \big(ax_0+x_1(0),-x'_1\big),
\end{equation}
where
$$
\mathcal D(A)=\Big\{\bo x\in H\,|\,\, x_1\in W^{1,2}([-d,0])\,,\,\, x_1(-d)=0 \}.
$$
The adjoint
$A^*\colon \mathcal D(A^*)\to H$ of $A$
is given by
$$
A^*\bo x=\big(ax_0,x'_1\big),
$$
where
$$
\mathcal D(A^*)=\Big\{\bo x\in H\,|\,\, x_1\in W^{1,2}([-d,0])\,,\,\, x_1(0)=x_0 \}.
$$
The operators $A$ and $A^*$ generate two $C_0$-semigroups $\left\{e^{tA}\right\}_{t\in\R^+}$ and $\left\{e^{tA^*}\right\}_{t\in\R^+}$ on $H$, explicitly given by
\begin{align}
&e^{tA}\bo x=\left(e^{at}x_0+\intz\indic_{[-t,0]}(s)e^{a(t+s)}x_1(s)\,ds,\,x_1(\cdot-t)\indic_{[-d+t,0]}(\cdot)\right)\,,\qquad\forall\,\bo x\in H\,,\notag\\
&e^{tA^*}\!\bo x=\left(e^{at}x_0,\,\indic_{[-t,0]}(\cdot)e^{a(\cdot+t)}x_0+x_1(\cdot+t)\indic_{[-d,-t]}(\cdot)\right)\,,\qquad\forall\,\bo x\in H\,.\label{semigruppi}
\end{align}
Now define the operators $\Sigma\colon \R\to H$, $\Gamma\colon \mathcal P_1(\Omega)\to H$ and $B\colon\R\to H$ by
$$
\Sigma x\coloneqq \big(\sigma x,0\big)\,,\qquad \Gamma(m)=\big(\gamma(m),0\big)\,,\qquad Bx=\big(b_0x,b_1(\cdot)x\big)=x\bo b\,,
$$
where $\bo b=(b_0,b_1)$.
A straightforward computation shows that the adjoint $B^*\colon H\to\R$ is
$$
B^*\bo x=\langle \bo b,\bo x\rangle=b_0x_0+\intz b_1(\xi)x_1(\xi)\,d\xi\qquad  \forall \bo x\in H.
$$
Now, for $t\in [0,T), \bo x\in H, u\in\mathcal U$, we consider the following SDE:
\begin{equation}\label{SDE}
\begin{cases}
d\bo X(s)=\big[A\bo X(s)+\Gamma(\tilde m_0(s))+Bu(s)\big]ds+\Sigma dW(s)\qquad \forall s\in (t,T)\\[5pt]
\bo X(t)=\bo x\\[5pt]
u(t+\xi)=\delta(\xi)\qquad\forall\,\xi\in[-d,0].
\end{cases}
\end{equation}
The abstract stochastic differential equation
\eqref{SDE}
admits
a unique mild solution $\bo X^{t,\bo x,u}$
(see e.g.\ \cite{DaPrato2014}), i.e., the pathwise continuous process in $L^2_\mathcal{P}(\Omega\times [0,T];H)$ given by the variation of constants formula:
\begin{equation*}
  \bo X^{t,\bo x,u}(s)=e^{(s-t)A}\bo x+\int_t^s e^{(s-r)A}\big[\Gamma(\tilde m_0(r))+Bu(r)\big]\,dr+\int_t^s e^{(s-r)A}\Sigma\,dW(r)\,,\quad \forall s\in[t,T]\,.
\end{equation*}

The connection
between \eqref{SDDE} and \eqref{SDE}
is clarified by
the following proposition.
For any $H$-valued process $Z$, we denote by $Z_0$ and $Z_1$ its orthogonal projections into the orthogonal components $\mathbb{R}$ and $L^2([-d,0];\mathbb{R})$ of $H=\mathbb{R}\times L^2([-d,0];\mathbb{R})$, respectively.

\begin{pro}\label{prop_equiv}
  Let $t\in[0,T), x\in \mathbb{R},u\in \mathcal{U}$.
  Let 
  \begin{equation}\label{x1}
      x_1(\zeta)\coloneqq \int_{-d}^\zeta b_1(\xi)\delta(\xi-\zeta)\,d\xi\qquad\forall\zeta\in[-d,0]
    \end{equation}
      and $\bo x=(x,x_1)$.
  Then  $\bo X^{t,\bo x,u}_0=X^{t,x,u}$.
\end{pro}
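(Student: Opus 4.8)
\medskip
\noindent
The plan is to compare, term by term, the variation-of-constants formula for the mild solution $\bo X^{t,\bo x,u}$ of \eqref{SDE} — projected onto the first factor $\R$ of $H$ — with the explicit solution of the scalar equation \eqref{SDDE}. First I would write, for $s\in[t,T]$,
\[
\bo X^{t,\bo x,u}(s)=e^{(s-t)A}\bo x+\int_t^s e^{(s-r)A}\Gamma(\tilde m_0(r))\,dr+\int_t^s e^{(s-r)A}Bu(r)\,dr+\int_t^s e^{(s-r)A}\Sigma\,dW(r),
\]
and take the orthogonal projection onto $\R$ using the explicit expression of $e^{tA}$ in \eqref{semigruppi}. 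Since $\Gamma(m)=(\gamma(m),0)$ and $\Sigma x=(\sigma x,0)$ have vanishing second component, the first component of $e^{(s-r)A}\Gamma(\tilde m_0(r))$ is simply $e^{a(s-r)}\gamma(\tilde m_0(r))$ and the first component of $\int_t^s e^{(s-r)A}\Sigma\,dW(r)$ is $\int_t^s e^{a(s-r)}\sigma\,dW(r)$; in particular no stochastic Fubini argument is needed for the noise term. The delay is carried by $e^{(s-t)A}\bo x$ and by $Bu(r)=(b_0u(r),b_1(\cdot)u(r))$, whose second component is non-trivial: from \eqref{semigruppi}, the first component of $e^{(s-t)A}\bo x$ equals $e^{a(s-t)}x+\intz\indic_{[-(s-t),0]}(\xi)e^{a(s-t+\xi)}x_1(\xi)\,d\xi$, and the first component of $e^{(s-r)A}Bu(r)$ equals $e^{a(s-r)}b_0u(r)+\intz\indic_{[-(s-r),0]}(\xi)e^{a(s-r+\xi)}b_1(\xi)u(r)\,d\xi$.

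On the other hand, reading \eqref{SDDE} as a scalar linear SDE in $X$ with drift coefficient $a$, its unique solution has the representation
\[
X^{t,x,u}(s)=e^{a(s-t)}x+\int_t^s e^{a(s-r)}\Big[\gamma(\tilde m_0(r))+b_0u(r)+\intz b_1(\xi)u(r+\xi)\,d\xi\Big]dr+\int_t^s e^{a(s-r)}\sigma\,dW(r).
\]
Comparing this with the projected formula above, all terms coincide except the delay contributions, so the statement reduces to the \emph{pathwise, deterministic} identity
\[
\intz\indic_{[-(s-t),0]}(\xi)e^{a(s-t+\xi)}x_1(\xi)\,d\xi+\int_t^s\!\intz\indic_{[-(s-r),0]}(\xi)e^{a(s-r+\xi)}b_1(\xi)u(r)\,d\xi\,dr=\int_t^s e^{a(s-r)}\!\intz b_1(\xi)u(r+\xi)\,d\xi\,dr .
\]
To establish it I would split the right-hand side according to whether $r+\xi\ge t$ or $r+\xi<t$. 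On $\{r+\xi\ge t\}$ the argument of $u$ lies in $[t,s]$, so one keeps $u$ as the controlled process; Fubini's theorem (legitimate since $b_1\in L^2([-d,0])$ and $u\in\mathcal U$) together with the substitution $\eta=r+\xi$ turns this part exactly into the double integral on the left. On $\{r+\xi<t\}$ one has $u(r+\xi)=\delta(r+\xi-t)$ by the history condition in \eqref{SDDE}, and a change of variables then identifies this part with the $x_1$-term — and this works \emph{precisely} because $x_1$ has been defined in \eqref{x1} as the corresponding convolution of $b_1$ against the past control $\delta$.

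The first two steps are routine. The only real obstacle is bookkeeping in the last one: one must keep careful track of the indicator functions and the integration limits, distinguishing the cases $s-t\le d$ and $s-t>d$, and check that the change of variables in the $\delta$-part reproduces \eqref{x1} on the nose. Once the identity is verified, equality of the two representations yields $\bo X^{t,\bo x,u}_0=X^{t,x,u}$, which is the assertion.
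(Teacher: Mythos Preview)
Your argument is correct and is exactly the standard computation that the paper defers to \cite{GozziMarinelli,difeo}: the paper's own proof is a one-line reference to those works, noting only that the extra mean-field drift $\gamma(\tilde m_0)$ causes no difficulty. Your term-by-term comparison of the variation-of-constants formulas, together with the Fubini/change-of-variables verification of the delay identity using the definition~\eqref{x1} of $x_1$, is precisely what those references do.
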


\begin{proof}
  Apart from the  term $\gamma(\tilde{m}_0)$ in~\eqref{SDDE}, wich does not cause any trouble, the proof is  the same as in 
  \cite{GozziMarinelli,difeo}.
\end{proof}

\smallskip
We now introduce
an auxiliary control problem.
For $t\in[0,T], \bo x\in H, u\in \mathcal{U}$, define
\begin{equation}\label{obj}
  \begin{split}
          &\mathcal J(t,\bo x,u)\coloneqq
  \E\left[\int_t^T e^{-r(s-t)}\Big[f(s,
    \bo X^{t,\bo x,u}_0(s),\tilde m_0(s))-L(s,\bo X^{t,\bo x,u}_0(s),u(s))\Big]\,ds\right.
  \\
  &\hspace{6.5cm}
  \left.\textcolor{white}{\int_t^T}+e^{-r(T-t)}g\big(\bo X^{t,\bo x,u}_0(T),\tilde m_0(T)\big)\right].
\end{split}
\end{equation}
The value function for the auxiliary problem is 
\begin{equation}\label{value}
\mathcal{V}(t,\bo x)=\sup\limits_{u\in\mathcal U}\mathcal J(t,\bo x,u)\qquad \forall (t,\bo x)\in [0,T]\times H.
\end{equation}
Notice that the auxiliary problem \eqref{value} generalizes the original
problem~\eqref{origV}, in the sense that, thanks to
Proposition~\ref{prop_equiv}, we have
\begin{equation*}
  V(t,x)=\mathcal{V}(t,\bo x),
\end{equation*}
where $\bo x=(x,x_1)$ and $x_1$ defined by~\eqref{x1}.

\section{The Mean Field Game system}\label{sec_mfgsystem}

\subsection{The Hamilton-Jacobi equation}\label{sec_HJB}
Tha Hamilton-Jacobi-Bellmann
equation formally associated to $\mathcal{V}$
is (see \cite{GozziBOOK})
\begin{equation}\label{hjb}
  \hskip-0.5cm
  \begin{cases}
\ds v_t(t,\bo x)+\frac12\tr\big(\Sigma\Sigma^*v_{\bo x\bo x}(t,\bo x)\big)+\mathcal H\big(t,\bo x,\tilde m_0(t),v_{\bo x}(t,\bo x)\big)+f\big(t,x_0,\tilde m_0(t)\big)=r v(t,\bo x)\\[5pt]
v(T,\bo x)=g(x_0,\tilde m_0(T))
\end{cases}
\end{equation}
for $(t,\bo x)\in[0,T)\times H$.
Denoting by $L^*$ the convex conjugate of $L$ with respect to the $u$ variable, meaning
\begin{equation}\label{Lstar}
  L^*(t,x_0,\bo q)=
  \sup\limits_{u\in \mathbb{R}}\Big(\langle u,\bo q\rangle-L(t,x_0,u)\Big)\qquad  \forall (t,x_0,\bo q)\in [0,T]\times \mathbb{R}\times H,
\end{equation}
the Hamiltonian function $\mathcal H$
is defined by
\begin{equation}\label{supremo}
  \mathcal H(t,\bo x,m,\bo p)=\langle A\bo x
  +\Gamma(m),\bo p\rangle+
  L^*(t,x_0,B^*\bo p)
\end{equation}
for $t\in [0,T), \bo x\in \mathcal{D}(A), m\in \mathcal{P}_1(\mathbb{R}), \bo p\in H$, or
by
\begin{equation*}
  \mathcal H(t,\bo x,m,\bo p)=\langle\bo x,A^*\bo p\rangle+\langle\Gamma(m),\bo p\rangle+L^*(t,x_0,B^*\bo p),
\end{equation*}
for $t\in [0,T), \bo x\in H, m\in \mathcal{P}_1(\mathbb{R}), \bo p\in \mathcal{D}(A^*)$.
Thanks to Assumptions~\ref{hp},
$L^*$
is well-defined and finite.

In general it is not true that $v_{\bo x}(t,\bo x)\in D(A^*)$ for all $(t,\bo x)\in[0,T)\times H$.
  A way to overcome this problem is to consider mild
  solutions to~\eqref{hjb}, as we will do in the next section.

  The optimal control for the player is given in feedback by computing
  when the supremum in~\eqref{Lstar}
  is attained.
  Differentiating with respect to $u$, we have to impose the condition
$$
B^*\bo p-L_u(t,x_0,u)=0\,.
$$
Since $L$ is strictly convex in $u$ for Assumption \ref{hp}, $L_u$ is strictly increasing (hence invertible), and the equation can be solved with respect to $u$, giving
$$
u=u(t,x_0,B^*p)=L_u^{-1}(t,x_0,B^*\bo p)=L^*_u(t,x_0,B^*\bo p).
$$
where the last equality comes from the property of the convex conjugate, see e.g.\ \cite{Ekeland}. Hence, the optimal control is given in feedback form by
\begin{equation}\label{optctrl}
u=L^*_u\big(t,X_0(t),B^*v_{\bo x}(t,\bo X(t))\big)\,.
\end{equation}
Observe that $L^*$ is strictly convex, with superlinear growth and differentiable in the control variable $u$, see e.g. \cite{Ekeland,GomesRicciardi}.


\subsection{The Fokker-Planck equation and the Mean Field Game system}\label{sec_FP}
Now we turn out to the equation satisfied by the law of the optimal trajectory associated to the controll $u$. Assuming that the process starts from an initial configuration $\bo X(0)$ with law $\overline m$,
from \eqref{optctrl}, the dynamics of the optimal trajectory
is formally given by
 the following SDE
 \begin{equation}\label{Xoptimal}
   \begin{cases}
d\bo X(t)=\big[A\bo X(t)+\Gamma(\tilde m_0(t))+BL^*_u\big(t,X_0(t),B^*v_{\bo x}(t,\bo X(t))\big)\big]dt+\Sigma\, dW(t)\,,\\
\mathscr L(\bo X(0))=\overline{m}.
\end{cases}
\end{equation}
By \eqref{supremo} the SDE can be rewritten as
$$
\begin{cases}
d\bo X(t)=\mathcal H_{\bo p}\big(t,\bo X(t),\tilde m_0(t),v_{\bo x}(t,\bo X(t))\big)\,dt+\Sigma\, dW(t)\,,\\
\mathscr L(\bo X(0))=\overline{m}.
\end{cases}
$$
If we define $m(t)=\mathscr L(X(t))$ for $t\in[0,T]$,
then $m$ satisfies the Fokker-Planck equation
\begin{equation}\label{fp}
\begin{cases}
\ds m_t-\frac12\tr\big(\Sigma\Sigma^*m_{\bo x\bo x}\big)+\div\Big(m\mathcal H_{\bo p}\big(t,\bo x,\tilde m_0(t),v_{\bo x}\big)\Big)=0\,,\\
\ds m(0)=\overline{m}\,.
\end{cases}
\end{equation}
Since $m$ is a measure, the previous equation
should be interpreted in the sense of distributions,
as specified by the following definition.
\begin{definition}
  Let $m\colon[0,T]\to\mathcal P_1(H)$. Then $m$ is a distributional solution of the Fokker-Planck equation \eqref{fp} if $m(0)=\overline{m}$ and, for all $\phi\in C^{\infty}_b([0,T]\times H)$
  and all $t\in(0,T]$, we have
\begin{gather}
\int_H\phi(t,\bo x)m(t,d\bo x)=\int_H\phi(0,\bo x)\,\overline{m}(d\bo x)\label{testfp}
\\+\int_0^t\int_H\Big(\phi_s(s,\bo x)+\frac12\mathrm{tr}\big(\Sigma\Sigma^*\phi_{\bo x\bo x}(s,\bo x)+\langle\mathcal H_{\bo p}\big(s,\bo x,\tilde m_0(s),v_{\bo x}(s,\bo x)\big),\phi_{\bo x}(s,\bo x)\rangle\Big)m(s,d\bo x)ds\,.\notag
\end{gather}
\end{definition}
Observe that the operator $\mathcal H_p$ involves the quantity $A\bo x$. Hence, one should impose the additional assumption
$m(t)(\mathcal{D(A)})=1$ for a.e.\ $t\in[0,T]$.
Alternatively, the test functions $\phi$ should satisfy the condition $\phi_{\bo x}(t,\bo x)\in D(A^*)$
for all $ (t,\bo x)\in(0,T)\times H$.

In Mean Field Games, players try to predict the behavior of the population and then optimize their own cost based on this prediction.  In a MFG equilibrium, each player's strategy coincides with their prediction of the mean field. When the players' strategies align with their predictions of the mean field, and the mean field evolves according to the aggregate of these strategies, you reach an equilibrium state where no player has an incentive to unilaterally deviate from their chosen strategy.

In formula, this means that,
denoting by $m_0$ the law of the first marginal $\bo X_0$ of the optimal process $\bo X$ with dynamics
\eqref{Xoptimal}, we should have $m_0=\tilde m_0$. This gives us a system of PDEs, where the backward Hamilton-Jacobi-Bellman equation 
for the value function $\mathcal{V}$
is coupled with the forward Fokker-Planck equation for the density of the population $m$. The system is the following one:
\begin{equation}\label{mfg}
  \begin{cases}
\ds v_t+\frac12\tr\big(\Sigma\Sigma^*v_{\bo x\bo x}\big)+\mathcal H\big(t,\bo x,m_0(t),v_{\bo x}\big)+f\big(t,x_0,m_0(t)\big)=r v\,,\\
\ds m_t-\frac12\tr\big(\Sigma\Sigma^*m_{\bo x\bo x}\big)+\div\Big(m\mathcal H_{\bo p}\big(t,\bo x, m_0(t),v_{\bo x}\big)\Big)=0\,,\\
\ds m(0)=\overline{m}\,,\qquad v(T,\bo x)=g(x_0, m_0(T))\,.
\end{cases}
\end{equation}

\section{Solution in a particular case}\label{sec_solution}
In this section, we consider a particular case of the Mean Field Game system, with a specific choice of the coefficients, letting us
to find
an explicit solution for the value function and the optimal control.

First of all, notice
that, due to the definition of $\Sigma$,
 we have
$$
\frac12\tr\big(\Sigma\Sigma^*v_{\bo x\bo x}(t,\bo x)\big)=\frac{\sigma^2}2v_{x_0x_0}(t,\bo x)\,,
$$
and the same occurs with the function $m$.

We then choose
the
Lagrangian cost, the running and terminal utility, and the $\gamma$ function, by setting
\begin{equation}\label{data1}
\begin{split}
&L(t,x_0,u)=\alpha u+\beta u^2\,,\qquad f(t,x_0,m_0)=f_0\left(t,x_0,\int_{\R} x\,m_0(dx)\right)\,,\\
&g(x_0,m_0)=g_0\left(x_0,\int_{\R}x\,m_0(dx)\right)\,,\qquad \gamma(m_0)=\gamma_0\int_{\R}x\,m_0(dx)\,,
\end{split}
\end{equation}
with $\alpha,\gamma_0\in\R$, $\beta>0$, $f_0\colon[0,T]\times\R^2\to\R$, $g_0\colon\R^2\to\R$. Hence, the utility functions $f$ and $g$ depends on the measure $m$ just as a function of the first marginal's average.

We define $\bo\mu(t)=(\mu_0(t),\mu_1(t))$ as the average of the process $\bo X(t)=(\bo X_0(t),\bo X_1(t))$, i.e., 
$$
\bo\mu(t)\coloneqq \E[\bo X(t)]=\int_H\bo x\,dm(x)\,.
$$
We assume that the functions $f_0$ and $g_0$ are linear with respect to both variables $(x_0,\mu_0)$, and independent of time:
\begin{equation}\label{data2}
f_0(t,x_0,\mu_0)=c(t)+\sigma_0(t) x_0+\theta(t) \mu_0\,,\qquad g_0(x_0,\mu_0)=c_T+\sigma_T x_0+\theta_T \mu_0\,.
\end{equation}
With this assumptions, we can  compute the optimal control in feedback form and the Hamiltonian of the system. We have
$$
L_u(t,x_0,u)=\alpha+2\beta u\,,\qquad L_u^{-1}(t,x_0,u)=\frac{u-\alpha}{2\beta}\,,\qquad L_u^{-1}(t,x_0,B^*\bo p)=\frac{B^*\bo p-\alpha}{2\beta}\,.
$$
Then the Hamiltonian $\mathcal H$ is
$$
\mathcal H(t,\bo x,m_0,\bo p)=\langle A\bo x,\bo p\rangle +\gamma_0p_0\int_{\R}x\,m_0(dx)+\frac{\big(B^*\bo p-\alpha\big)^2}{4  \beta}\,.
$$
We consider the function $\mathcal H^0:[0,T]\times H\times\R\times H$ defined by
$$
\mathcal H^0(t,\bo x,\mu_0,\bo p)=\langle A\bo x+\mu_0\bo\gamma,\bo p\rangle+\frac{\big(B^*\bo p-\alpha\big)^2}{4  \beta}=\langle A\bo x+\mu_0\bo\gamma,\bo p\rangle+\frac{\big(\langle \bo b,\bo p\rangle-\alpha\big)^2}{4  \beta}\,,
$$
with $\bo\gamma=(\gamma_0,\gamma_1)$. We have
$$
\mathcal H(t,\bo x,m_0,\bo p)=\mathcal H^0\left(t,\bo x,\int_\R x\,m_0(dx),\bo p\right),\quad H^0_{\bo p}(t,\bo x,\mu_0,\bo p)= A\bo x+\mu_0\bo\gamma+\frac{\big(\langle \bo b,\bo p\rangle-\alpha\big)\bo b}{2  \beta}.
$$
The Hamilton-Jacobi-Bellman equation has the form
$$
\begin{cases}
\ds v_t(t,\bo x)+\frac{\sigma^2}2v_{x_0x_0}(t,\bo x)+\langle A\bo x+\mu_0(t)\bo \gamma,v_{\bo x}(t,\bo x)\rangle+\frac{\big(\langle \bo b,v_{\bo x}(t,\bo x)\rangle-\alpha\big)^2}{4  \beta}\\\hspace{6.55cm}-rv(t,\bo x)+c(t)+\langle\bo\sigma(t),\bo x\rangle+\theta(t)\mu_0(t)=0\,,\\
\ds v(T,\bo x)=c_T+\langle\bo\sigma_T,\bo x\rangle+\theta_T\mu_0(T)\,,
\end{cases}
$$
where $\bo\sigma(t)=(\sigma_0(t),0)$ and $\bo\sigma_T=(\sigma_T,0)$. Observe that it is not necessary to compute the density of the population $m$ to find the value function $v$, since $v$ just depends on $\mu_0(\cdot)$. Hence, instead of the Fokker-Planck equation, we couple the Hamilton-Jacobi-Bellman equation with the one satisfied by $\mu_0(\cdot)$.

To do so, it suffices to take the equation \eqref{SDE} satisfied by $\bo X(t)$ and computing the average. Observe that we can write
$$
\mu_0(t)\bo\gamma=\Gamma_0\bo\mu(t)\,,\qquad\mbox{where}\quad \Gamma_0\colon H\to H\,,\quad \Gamma_0\bo\mu\coloneqq (\gamma_0 x_0,0)\,.
$$
Hence, we get
$$
\frac{d}{dt}\E[\bo X(t)]=(A+\Gamma_0)\E[\bo X(t)]+\bo\psi(t)\,,\qquad\mbox{where}\quad \bo\psi(t)\coloneqq \frac{\langle \bo b,\E\big[\bo v_{\bo x}(t,\bo X(t))\big]\rangle-\alpha}{2  \beta}\,\bo b\,.
$$
Calling $\bo{\bar\mu}=(\bar\mu_0,\bar\mu_1)=\E[\bo{\bar X}(0)]$, we obtain the equation for $\bo\mu$:
$$
\begin{cases}
\bo\mu'(t)=(A+\Gamma_0)\bo\mu(t)+\bo\psi(t)\,,\\
\bo\mu(0)=\bo{\bar\mu}\,.
\end{cases}
$$
Hence, we can write a simplified Mean Field Game system for the couple $(v,\bo\mu)$:
\begin{equation}\label{mfgesempio}
\begin{dcases}
\ds v_t(t,\bo x)+\frac{\sigma^2}2v_{x_0x_0}(t,\bo x)+\langle A\bo x+\mu_0(t)\bo \gamma,v_{\bo x}(t,\bo x)\rangle+\frac{\big(\langle \bo b,v_{\bo x}(t,\bo x)\rangle-\alpha\big)^2}{4  \beta}\\\hspace{6.55cm}-rv(t,\bo x)+c(t)+\langle\bo\sigma(t),\bo x\rangle+\theta(t)\mu_0(t)=0\,,\\[5pt]
\bo\mu'(t)=(A+\Gamma_0)\bo\mu(t)+\bo\psi(t)\,,\\[5pt]
\bo\mu(0)=\bo{\bar\mu}\,.\,,\qquad  v(T,\bo x)=c_T+\langle\bo\sigma_T,\bo x\rangle+\theta_T\mu_0(T)\,.
\end{dcases}
\end{equation}
The problem here lies in the function $\bar\psi$, since it still depends on the density function $m$. We show now that, in this particular case, the dependence on $m$ disappears, making the system \eqref{mfgesempio} solvable.

We look for a solution of the HJB equation of the form
$$
v(t,\bo x)=\langle \bo h(t),\bo x\rangle + k(t)\,,\qquad \bo h(t)=\big(h_0(t),h_1(t,\cdot)\big)\in H\,.
$$
Computing the derivatives, we find
$$
v_t(t,\bo x)=\langle \bo h'(t),\bo x\rangle+k'(t)\,,\qquad v_{\bo x}(t,\bo x)=\bo h(t)\,,\qquad v_{\bo x\bo x}(t,\bo x)=0\,.
$$
Hence, the Hamilton-Jacobi-Bellman equation becomes
$$
\begin{cases}
\ds\langle \bo h'(t),\bo x\rangle+k'(t)+\langle A^*\bo h(t),\bo x\rangle +\mu_0(t)\langle \bo h(t),\bo\gamma\rangle-r\langle\bo h(t),\bo x\rangle-rk(t)+\frac{\big(\langle \bo b,\bo h(t)\rangle-\alpha\big)^2}{4  \beta}\\
\ds\hspace{7.25cm}+c(t)+\langle\bo\sigma(t),\bo x\rangle+\theta(t) \mu_0(t)=0\,,\\
\ds\langle \bo h(T),\bo x\rangle+k(T)=c_T+\langle\sigma_T,\bo x\rangle+\theta_T\mu_0(T)\,.
\end{cases}
$$
This equation can be divided into two different equations, by considering first the terms containing $\bo x$ and then all the other ones. Moreover, observe that the function $\bo\psi$ is now given by
$$
\bo\psi(t)=\frac{\langle \bo b,\E\big[\bo h(t)\big]\rangle-\alpha}{2  \beta}\,\bo b=\frac{\langle \bo b,\bo h(t)\rangle-\alpha}{2  \beta}\,\bo b\,,
$$
which, as told before, does not depend on $m$.

Hence, the system \eqref{mfgesempio} is equivalent to the following system of three equations for the triple $(\bo h,k,\bo\mu)$:
\begin{equation}\label{equiva}
\begin{cases}
\ds  \bo h'(t)+(A^*-r)\bo h(t)+\bo\sigma(t)=0\,,\\[5pt]
\ds k'(t)-rk(t)+\mu_0(t)\langle \bo h(t),\bo\gamma\rangle+\frac{\big(\langle \bo b,\bo h(t)\rangle-\alpha\big)^2}{4  \beta}+c(t)+\theta(t) \mu_0(t)=0\,,\\[5pt]
\ds \bo\mu'(t)=(A+\Gamma_0)\bo\mu(t)+\frac{\langle \bo b,\bo h(t)\rangle-\alpha}{2  \beta}\,\bo b\,,\\[5pt]
\bo\mu(0)=\bo{\bar\mu}\,,\qquad \bo h(T)=\bo\sigma_T\,,\qquad k(T)=c_T+\theta_T\mu_0(T)\,.
\end{cases}
\end{equation}
Observe that we cannot
expect to have
$\bo h(t)\in D(A^*)$ for all $t\in[0,T)$.
Hence, the equation for $\bo h$ has to be intended in a mild sense.
Since the equation for $\bo h$ is decoupled (it does not depend on $\mu_0$ or $k$), the mild solution is:
$$
\bo h(t)=e^{(T-t)(A^*-r)}\bo\sigma_T+\int_t^T e^{(s-t)(A^*-r)}\bo\sigma(s)\,ds\,.
$$
From here we can solve the equation for $\bo\mu$. Observe that, for $\bo x\in\mathcal D(A)$, $(A+\Gamma)\bo x=((a+\gamma_0)x_0+x_1(0),-x'_1(\cdot))$. Hence, $A+\Gamma$ has the same structure of the operator $A$, if we replace $a$ with $a+\gamma_0$. This implies that we get the same formulas as \eqref{semigruppi} for the $C_0$-semigroup $\{e^{t(A+\Gamma)}\}_{t\in\R^+}$:
$$
e^{t(A+\Gamma)}\bo x=\left(e^{(a+\gamma_0)t}x_0+\intz\indic_{[-t,0]}(s)e^{(a+\gamma_0)(t+s)}x_1(s)\,ds,\,x_1(\cdot-t)\indic_{[-d+t,0]}(\cdot)\right)\,.
$$
Hence, we obtain
$$
\bo\mu(t)=e^{(A+\Gamma)t}\bo{\bar\mu}+\int_0^t e^{(A+\Gamma)(t-s)}\bps{s}\,ds\,.
$$
Finally, the equation for $k$ gives
$$
k(t)=e^{r(t-T)}(c_T+\theta_T\mu_0(T))+\int_t^T e^{r(t-s)}\Big(\mu_0(s)\big(\langle \bo h(s),\bo\gamma\rangle+\theta(s)\big)+\frac{\big(\langle \bo b,\bo h(s)\rangle-\alpha\big)^2}{4  \beta}+c(s)\Big)ds\,.
$$
Hence, we have solved the system \eqref{equiva} in a mild sense, finding the solution
\begin{equation}\label{khm}
\begin{cases}
\ds \bo h (t)&\ds=e^{(T-t)(A^*-r)}\bo\sigma_T+\int_t^T e^{(s-t)(A^*-r)}\bo\sigma(s)\,ds\,,\\
\ds \bo\mu (t)&\ds=e^{(A+\Gamma)t}\bo{\bar\mu}+\int_0^t e^{(A+\Gamma)(t-s)}\frac{\langle \bo b,\bo h (s)\rangle-\alpha}{2  \beta}\,\bo b\,ds\,,\\
\ds k (t)&\ds=e^{r(t-T)}(c_T+\theta_T\mu_0 (T))\\
&\ds+\int_t^T e^{r(t-s)}\Big(\mu_0 (s)\big(\langle \bo h (s),\bo\gamma\rangle+\theta(s)\big)+\frac{\big(\langle \bo b,\bo h (s)\rangle-\alpha\big)^2}{4  \beta}+c(s)\Big)ds\,.
\end{cases}
\end{equation}

Since $\bo h\notin C^1([0,T];H)$, the equivalence between \eqref{mfgesempio} and \eqref{equiva} is formal, and we cannot say rigorously that the function $\bo v$ obtained is actually the value function of our problem 
We handle this problem in the next subsection. 

\subsection{A verification theorem}\label{sec_verification}
Here we want to prove that $v$ is the value function $\mathcal{V}$
defined in \eqref{value}.

To do this, we will make use of the Yosida approximation $A_n$ of $A$ (as in \cite{GozziMasiero2}).
It is well-known that
\begin{equation}\label{conv}
e^{tA_n}\bo x\to e^{tA}\bo x\,,\qquad e^{tA_n^*}\bo x\to e^{tA^*}\bo x\qquad \forall x\in H\,,
\end{equation}
where the convergence is uniform for $t\in[0,T]$ and locally uniform for $\bo x\in H$.

For $n\in \mathbb{N}$ large enough, we  consider, for $u\in\mathcal U$, the unique strong solution of the SDE
\begin{equation}\label{SDEn}
\begin{cases}
d\bo X^n(s)=\big[A_n\bo X^n(s)+\gamma_0\mu_0^n(s)+Bu(s)\big]ds+\Sigma dW(s)\,,\\
\bo X^n(t)=\bo x\,,\\
u(t+\xi)=\delta(\xi)\,,\qquad\forall\,\xi\in[-d,0]\,,
\end{cases}
\end{equation}
associated to the same objective functional \eqref{obj} (with $\bo X^{t,\bo x,u}$ replaced by $\bo X^n$). Here $\mu_0^n$ represents the law of $\bo X^n_0$ if $u$ is the optimal control, i.e., in a MFG equilibrium.

The simplified MFG system for $(v^n,\bo\mu^n)$ associated to this approximated problem is given by
\begin{equation}\label{mfgn}
\begin{cases}
\ds v^n_t(t,\bo x)+\frac{\sigma^2}2v^n_{x_0x_0}(t,\bo x)+\langle A^n\bo x+\mu_0^n(t)\bo \gamma,v^n_{\bo x}(t,\bo x)\rangle+\frac{\big(\langle \bo b,v^n_{\bo x}(t,\bo x)\rangle-\alpha\big)^2}{4  \beta}\\\hspace{6.55cm}-rv^n(t,\bo x)+c(t)+\langle\bo\sigma(t),\bo x\rangle+\theta(t)\mu^n_0(t)=0\,,\\
\ds (\bo\mu^n)'(t)=(A+\Gamma_0)\bo\mu^n(t)+\bo\psi^n(t)\,,\\
\ds\bo\mu^n(0)=\bo{\bar\mu}\,,\qquad  v^n(T,\bo x)=c_T+\langle\bo\sigma_T,\bo x\rangle+\theta_T\mu_0^n(T)\,.
\end{cases}
\end{equation}
As before, the solution is given by
\begin{equation}\label{defvn}
v^n(t,\bo x)=\langle\bo h^n(t),\bo x\rangle+k^n(t)\,,
\end{equation}
where the triple $(\bo h^n,k^n,\bo\mu^n)$ solve the system
\begin{equation}\label{equivan}
\begin{cases}
\ds  \bo (h^n)'(t)+(A_n^*-r)\bo h^n(t)+\bo\sigma(t)=0\,,\\
\ds (k^n)'(t)-rk^n(t)+\mu_0^n(t)\langle \bo h^n(t),\bo\gamma\rangle+\frac{\big(\langle \bo b,\bo h^n(t)\rangle-\alpha\big)^2}{4  \beta}+c(t)+\theta(t) \mu_0^n(t)=0\,,\\
\ds (\bo\mu^n)'(t)=(A_n+\Gamma_0)\bo\mu(t)+\frac{\langle \bo b,\bo h^n(t)\rangle-\alpha}{2  \beta}\,\bo b\,,\\
\bo\mu^n(0)=\bo{\bar\mu}\,,\qquad \bo h^n(T)=\bo\sigma_T\,,\qquad k^n(T)=c_T+\theta_T\mu_0^n(T)\,.
\end{cases}
\end{equation}
As before, the solution is given by
$$
\begin{cases}
\ds \bo h^n(t)&\ds=e^{(T-t)(A_n^*-r)}\bo\sigma_T+\int_t^T e^{(s-t)(A_n^*-r)}\bo\sigma(s)\,ds\,,\\
\ds \bo\mu^n(t)&\ds=e^{(A_n+\Gamma)t}\bo{\bar\mu}+\int_0^t e^{(A_n+\Gamma)(t-s)}\frac{\langle \bo b,\bo h^n(s)\rangle-\alpha}{2  \beta}\,\bo b\,ds\,,\\
\ds k^n(t)&\ds=e^{r(t-T)}(c_T+\theta_T\mu_0^n(T))\\
&\ds+\int_t^T e^{r(t-s)}\Big(\mu_0^n(s)\big(\langle \bo h^n(s),\bo\gamma\rangle+\theta(s)\big)+\frac{\big(\langle \bo b,\bo h^n(s)\rangle-\alpha\big)^2}{4  \beta}+c(s)\Big)ds\,.
\end{cases}
$$

Since $A_n$ and $A^*_n$ are bounded,
for all $\bo x\in H$ we have $e^{tA_n}\bo x$ and $e^{tA_n^*}$ differentiable in $t$,
and
then $\bo h^n\in C^1([0,T];H)$, $\mu^n\in C^1([0,T];H)$ and $k^n\in C^1([0,T];\R)$, and so they are classical solutions of \eqref{equivan}. Hence, the formal computations given previously now become rigorous, and we know that $(v^n,\bo\mu^n)$, with $v^n$ defined in \eqref{defvn}, is a classical solution of \eqref{mfgn}.\\

Now we are ready to conclude with the following verification theorem for the value function $v$.
\begin{theorem}
Consider the process $\bo X$ satisfying \eqref{SDE}, with $t=0$ and with $\mathcal L(\bo X(0))=\bar m$. Let $\bo{\bar\mu}=\E[\bo X(0)]$ and let $\mathcal J$, $\mathcal{V}$ be defined as in \eqref{obj}, \eqref{value}.

If the data $f,g,L,\gamma$ are defined as in \eqref{data1}, \eqref{data2}, with continuous functions $\sigma_0, c,\theta$, then we have
$$
V(t,\bo x)=v(t,\bo x)=\langle \bo h(t),\bo x\rangle+k(t)\,,
$$
with $\bo h$, $k$ defined in \eqref{khm}.
\end{theorem}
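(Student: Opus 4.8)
The plan is to carry out a classical verification argument on the Yosida‑regularised problems, where the candidate value function $v^n$ is genuinely smooth, and then to recover the assertion for $v$ itself by letting $n\to\infty$. Two structural remarks make the regularised step clean. First, since $v^n(t,\bo x)=\langle\bo h^n(t),\bo x\rangle+k^n(t)$ is affine, it is automatically $C^{1,2}$ with $v^n_{\bo x\bo x}=0$, and since $A_n$ is bounded, the process $\bo X^n$ solving \eqref{SDEn} is a \emph{strong} solution, so Itô's formula is available with no further regularisation. Second, taking expectations in \eqref{SDEn} along the deterministic feedback $\hat u^n(s)=\frac{\langle\bo b,\bo h^n(s)\rangle-\alpha}{2\beta}$ and comparing with the $\bo\mu^n$‑equation in \eqref{equivan}, uniqueness for linear ODEs gives $\E[\bo X^n(s)]=\bo\mu^n(s)$; hence the (fixed) mean field $m_0^n$ appearing in the regularised objective $\mathcal J^n$ is consistent with the optimal trajectory. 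The same computation with $A,\hat u,\bo\mu$ shows that the flow $\tilde m_0=m_0$ entering \eqref{obj} is the one induced by the optimal trajectory, i.e.\ the Nash consistency $m_0=\tilde m_0$ holds.

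Next I would prove $v^n=\mathcal V^n$, where $\mathcal V^n$ is the value function of the frozen‑measure problem \eqref{obj} with dynamics \eqref{SDEn}. Applying Itô's formula to $s\mapsto e^{-r(s-t)}v^n(s,\bo X^n(s))$, substituting the Hamilton–Jacobi–Bellman equation of \eqref{mfgn}, and using the Fenchel inequality $u\,B^*v^n_{\bo x}-L(s,\bo X^n_0(s),u)\le L^*\big(s,\bo X^n_0(s),B^*v^n_{\bo x}\big)$, the $ds$‑drift collapses to $-e^{-r(s-t)}\big(f(s,\bo X^n_0(s),m_0^n(s))-L(s,\bo X^n_0(s),u(s))\big)$ plus a nonpositive remainder. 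Taking expectations (the stochastic term is a true martingale because $v^n_{\bo x}=\bo h^n$ is deterministic and bounded) and using the terminal condition $v^n(T,\bo x)=g(x_0,m_0^n(T))$, one obtains $v^n(t,\bo x)\ge\mathcal J^n(t,\bo x,u)$ for every $u\in\mathcal U$, with equality for $u=\hat u^n$, which is deterministic, hence admissible, and which makes Fenchel's inequality an identity. Therefore $v^n=\mathcal V^n$ and $\hat u^n$ is optimal for the regularised problem.

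Finally I would pass to the limit $n\to\infty$. The explicit formulas for $(\bo h^n,\bo\mu^n,k^n)$ together with the convergences \eqref{conv} (uniform in $t\in[0,T]$, locally uniform in $H$) give $v^n(t,\bo x)\to v(t,\bo x)$ and $\hat u^n\to\hat u$ uniformly on $[0,T]$, where $\hat u(s)=\frac{\langle\bo b,\bo h(s)\rangle-\alpha}{2\beta}$. Feeding \eqref{conv} into the variation‑of‑constants formula and a Grönwall estimate yields $\bo X^n\to\bo X^{t,\bo x,u}$ in $L^2_{\mathcal P}(\Omega\times[0,T];H)$ for each fixed $u$, and similarly the trajectory driven by $\hat u^n$ converges to the one driven by $\hat u$; in particular the first‑marginal laws converge, so $m_0^n\to m_0$ in $\mathbf d_1$. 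By the continuity and at‑most‑quadratic growth of $f,L,g$ from Assumptions~\ref{hp}, this forces $\mathcal J^n(t,\bo x,u)\to\mathcal J(t,\bo x,u)$ for each fixed $u$ and $\mathcal J^n(t,\bo x,\hat u^n)\to\mathcal J(t,\bo x,\hat u)$. The first convergence together with $\mathcal J^n(t,\bo x,u)\le v^n(t,\bo x)$ gives, after taking the supremum over $u$, $\mathcal V(t,\bo x)\le v(t,\bo x)$; the second gives $\mathcal V(t,\bo x)\ge\mathcal J(t,\bo x,\hat u)=\lim_n\mathcal J^n(t,\bo x,\hat u^n)=\lim_n v^n(t,\bo x)=v(t,\bo x)$. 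Hence $\mathcal V=v$, and $V(t,x)=\mathcal V(t,\bo x)=\langle\bo h(t),\bo x\rangle+k(t)$ via Proposition~\ref{prop_equiv}.

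I expect the last step to be the main obstacle: one must control jointly the convergence of the regularised state processes \emph{and} of the induced measure flows $m_0^n$, and verify that both the admissibility of $\hat u^n$ and the Nash consistency survive the Yosida approximation; everything else is either explicit (thanks to the affine ansatz for $v$) or a routine combination of Itô's formula and Fenchel's inequality.
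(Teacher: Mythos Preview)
Your proposal is correct and follows the same Yosida-approximation route as the paper, but you reorder two steps in a way that makes your limit passage heavier than necessary. You apply the Fenchel/verification argument at level $n$ (establishing $v^n=\mathcal V^n$) and then pass to the limit in $\mathcal J^n(t,\bo x,u)\to\mathcal J(t,\bo x,u)$ and $\mathcal J^n(t,\bo x,\hat u^n)\to\mathcal J(t,\bo x,\hat u)$; this is exactly the ``main obstacle'' you flag, since it requires controlling the convergence of the measure flows inside the functional. The paper instead keeps the It\^o computation at level $n$ as an \emph{identity} (no Fenchel yet), passes to the limit in that identity --- which only involves the explicit objects $\bo h^n,\bo\mu^n,k^n,\bo X^n$, all converging by \eqref{conv} and the mild formulation --- and only \emph{after} the limit recognises the right-hand side as $\mathcal J(t,\bo x,u)+\int_t^T e^{-r(s-t)}L(s,\cdot,u(s))\,ds$ and completes the square, yielding $v(t,\bo x)=\mathcal J(t,\bo x,u)+\int_t^T\frac{e^{-r(s-t)}}{4\beta}\big(2\beta u(s)-(\langle\bo b,\bo h(s)\rangle-\alpha)\big)^2ds$. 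This ordering sidesteps entirely the need to prove $\mathcal J^n\to\mathcal J$ or $m_0^n\to m_0$ as separate statements, and it makes the optimal control $\bar u(s)=\frac{\langle\bo b,\bo h(s)\rangle-\alpha}{2\beta}$ appear directly from the vanishing of the square. Your Nash-consistency check $\E[\bo X^n]=\bo\mu^n$ is correct but not actually needed for the verification theorem itself, since $\mathcal V$ is defined with $\tilde m_0$ frozen at the equilibrium $m_0$ from the outset.
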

\begin{proof}
We start considering, for a generic control $u(\cdot)\in\mathcal U$, the process $\bo X^n(\cdot)$ solving \eqref{SDEn}. For $\bo v^n$ defined as in \eqref{defvn}, and solution of \eqref{mfgn}, we apply Ito's formula to the process $\{e^{-r(s-t)}v^n(s,\bo X^n(s))\}_{s\ge t}$, obtaining
\begin{align*}
&\E\left[e^{-r(T-t)}v^n(T,\bo X^n(s))\right]=\E[v^n(t,\bo x)]\\
+&\E\left[\int_t^T e^{-r(s-t)}\left(-rv^n(t,\bo X^n(s))+v^n_t(t,\bo X^n(s))+\frac{\sigma^2}2v^n_{x_0x_0}(t,\bo X^n(s))\right)\,ds\right]\\
+&\E\left[\int_t^T e^{-r(s-t)}\big\langle A_n\bo X^n(s)+Bu(s)+\bo\gamma \mu_0^n(s),v^n_{\bo x}(s,\bo X^n(s))\big\rangle\,ds\right]\,.
\end{align*}
Since $v^n$ satisfies \eqref{mfgn}, the above equality becomes
\begin{equation}\label{step1n}
\begin{split}
&\hspace{3cm}e^{-r(T-t)}\Big(c_T+\theta_T\mu^n_0(T)+\E[\langle \bo\sigma_T,\bo X^n(T)\rangle]\Big)=v^n(t,\bo x)\\
&+\,\E\left[\int_t^T e^{-r(s-t)}\left(u(s)\big\langle \bo b, \bo h^n(s)\big\rangle-\frac{\big(\langle \bo b,\bo h^n(s)\rangle-\alpha\big)^2}{4  \beta}-c(s)-\langle\bo\sigma(s),\bo X^n(s)\rangle-\theta\mu^n_0(s)\right)ds\right].
\end{split}
\end{equation}
Observe that, thanks to \eqref{conv}, we have $(\bo h^n( t),k^n( t),\bo\mu^n( t))\to(\bo h( t),k( t),\bo\mu( t))$ uniformly in $t\in[0,T]$. This implies $v^n(t,\bo x)\to v(t,\bo x)$ uniformly in $t\in[0,T]$ and locally uniformly in $\bo x\in H$.

Moreover, since $\bo X^n(\cdot)$ is a strong solution of \eqref{SDEn}, it is also a mild solution.
 Hence, we can write
$$
\bo X^n(s)=e^{(s-t)A_n}\bo x+\int_t^s e^{(s-r)A_n}\big[\gamma_0\mu^n_0(r)+Bu(r)\big]\,dr+\int_t^s e^{(s-r)A_n}\Sigma\,dW(r)\,,\quad \forall s\in[t,T]\,.
$$
Again from \eqref{conv}, we have
$$
\bo X^n\to \bo X\qquad\mbox{in }L^2_{\mathcal P}(\Omega\times[0,T];H)\,.
$$
Hence, we can pass to the limit in \eqref{step1n}, obtaining
\begin{equation}\label{step1}
\begin{split}
v  (t,\bo x)&+\E\left[\int_t^T e^{-r(s-t)}\left(u(s)\big\langle \bo b, \bo h  (s)\big\rangle-\frac{\big(\langle \bo b,\bo h  (s)\rangle-\alpha\big)^2}{4  \beta}\right)ds\right]\\
&=\E\left[\int_t^T e^{-r(s-t)}\Big(c(s)+\langle\bo\sigma(s),\bo X  (s)\rangle+\theta\mu _0(s)\Big)\,ds\right]\\
&+\E\Bigg[e^{-r(T-t)}\Big(c_T+\theta_T\mu  _0(T)+\langle \bo\sigma_T,\bo X  (T)\rangle\Big)\Bigg].
\end{split}
\end{equation}
Observe that the last two lines are equal to
$$
\mathcal J(t,\bo x,u)+\int_t^T e^{-r(s-t)}\left[\alpha u(s)+\beta u(s)^2\right]\,ds
$$
This implies
\begin{align*}
v(t,\bo x)&=\mathcal J(t,\bo x,u)+\int_t^T e^{-r(s-t)}\left(u(s)\big(\alpha-\big\langle \bo b, \bo h  (s)\big\rangle\big)+\beta u(s)^2+\frac{\big(\langle \bo b,\bo h  (s)\rangle-\alpha\big)^2}{4  \beta}\right)ds\\
&=\mathcal J(t,\bo x,u)+\int_t^T \frac{e^{-r(s-t)}}{4\beta}\Big(2\beta u(s)-\big(\langle\bo b,\bo h(s)\rangle-\alpha\big)\Big)^2ds
\end{align*}
Since the quantity inside the integral is positive, we have
$$
v(t,\bo x)\ge \mathcal J(t,\bo x, u)\qquad\forall u\in\mathcal U\implies v(t,\bo x)\ge \mathcal{V}(t,\bo x)\,.
$$
Moreover, choosing
$$
\bar u(s)=\frac{\langle\bo b,\bo h(s)\rangle-\alpha}{2\beta}\,,
$$
we get
$$
v(t,\bo x)=\mathcal J(t,\bo x,\bar u)\implies v(t,\bo x)\le \mathcal{V}(t,\bo x)\,.
$$
This implies $v(t,\bo x)=V(t,\bo x)$ and concludes the proof.

\end{proof}

\printbibliography

\end{document}